\documentclass[11pt,oneside,english]{amsart}
\usepackage{bbm}
\usepackage{mathrsfs}
\usepackage{amsfonts}
\usepackage{amssymb}
\usepackage[colorlinks]{hyperref}
\usepackage{graphicx}
\usepackage{babel}

\makeatletter \theoremstyle{plain}
 \newtheorem{thm}{Theorem}[section]
 
 \newtheorem{prop}[thm]{Proposition}
 
 \numberwithin{equation}{section}
 \numberwithin{figure}{section}
 \theoremstyle{plain}
 \theoremstyle{definition}

\newcommand{\cH}{{{\mathcal H}}}
\newcommand{\fH}{{{\mathfrak H}}}

\newcommand{\bH}{{{\bf H}}}

\newcommand{\C}{{{\mathbb C}}}

\newcommand{\rd}{{{\rm d}}}
\makeatother

\begin{document}
\title [Bisectors in the Heisenberg group II]
{Bisectors in the Heisenberg group II}

\author{Gaoshun Gou}
\address {School of Science,
Chongqing-University of Posts and Telecommunications,
Chongqing {\rm 400065},
P. R. China.}
\email{gougs@cqupt.edu.cn}

\author{Joonhyung Kim}
\address{Department of Mathematics Education,
Chungnam National University,
Daejeon 34134, South Korea.}
\email{calvary@cnu.ac.kr}

\author{Ioannis D. Platis}
\address{Department of Mathematics,
University of Patras,
University Campus, 26504 Rion, Achaia,
Greece.}
\email{idplatis@upatras.gr}

\author{Li-Jie Sun}
\address{Department of Mathematics,
Osaka Metropolitan University,
3-3-138 Sugimoto, Sumiyoshi-ku, Osaka 558-8585, Japan.}
\email{sun@omu.ac.jp}

\keywords{bisector, Heisenberg group, Carnot-Carath\'eodory metric\\
{\it 2020 Mathematics Subject Classification:} Primary 51M10, 53A10, 53C17, 57M50}

\begin{abstract}
Following previous work on metric bisectors with respect to the Kor\'anyi metric, we study here bisectors in the Heisenberg group $\mathfrak{H}$ equipped with the Carnot--Carath\'eodory metric $d_{cc}$.
\end{abstract}

\maketitle

\section{Introduction}

A metric bisector in a metric space $(X, d)$ is defined as the subset of points equidistant from two distinct anchor points $p_1, p_2 \in X$. While bisectors in classical Riemannian manifolds such as Euclidean space are structurally simple (for example, hyperplanes), they can manifest as highly complex geometric objects in arbitrary metric spaces. This complexity is prominently featured within the Heisenberg group $\mathfrak{H}$, a fundamental example of a two-step nilpotent Lie group and a simple sub-Riemannian manifold \cite{CCG2007, M02}.

The Heisenberg group can be naturally equipped with two bi-Lipschitz equivalent, left-invariant metrics: the algebraically defined Kor\'anyi--Cygan metric $d_K$, and the Carnot--Carath\'eodory path metric $d_{cc}$. A critical geometric bridge between these two spaces is that both $d_K$ and $d_{cc}$ share identical isometry groups and many similarities. Consequently, the normalisation techniques applied to anchor points, that is, categorising them into cases where they lie on either the same infinite or finite $\mathbb{C}$-circle, remain universally applicable across both metrics. However, despite these shared symmetries, we will show that bisectors with respect to $d_{cc}$ are generally not the same objects as bisectors with respect to $d_K$.

In our previous work \cite{GJP}, we established a complete geometric classification of bisectors with respect to the Kor\'anyi metric. We demonstrated that a Kor\'anyi bisector is always a spinal sphere, and conversely, every spinal sphere acts as a Kor\'anyi bisector \cite{Gol, PP06}. When examining their horizontal geometry, a stark dichotomy emerged based on the configuration of the focal points. For anchor points on an infinite $\mathbb{C}$-circle (the vertical case), the Kor\'anyi bisector $\mathcal{B}_K^0$ is a complex plane $\mathbb{C}$ and is horizontally minimal everywhere. Conversely, for points on a finite $\mathbb{C}$-circle (the horizontal case), the bisector $\mathcal{B}_K^1$ forms the spinal sphere $\mathfrak{S}_1$, governed by the cubic polynomial surface $x(x^2+y^2+1)-yt = 0$. This algebraic surface fails to be globally horizontally minimal; its horizontal mean curvature diverges to infinity near the characteristic locus $(0, \pm1, 0)$ and tends to zero solely as it moves away from these singular points.

In the present paper, we extend this investigation to bisectors defined by the Carnot--Carath\'eodory metric, uncovering both remarkable parallels and structural divergences. Because $d_{cc}$ is an intrinsic path metric defined by the infimum of horizontal lengths of all horizontal curves joining two points \cite{CDPT}, transitioning from $d_K$ to $d_{cc}$ inherently replaces algebraic gauge equations with transcendental systems governed by Hamiltonian geodesic flows \cite{CCG2007}.

Our comparative analysis yields the following central results:

\begin{enumerate}
    \item \textbf{Symmetrical Equivalency in the Vertical Case:} For focal points positioned on an infinite $\mathbb{C}$-circle, the structural equivalence between the two metrics is perfectly preserved. The Carnot--Carath\'eodory bisector $\mathcal{B}_{cc}^0$ is identically the complex plane $t=0$. Like its Kor\'anyi counterpart, it is devoid of characteristic singularities and maintains global horizontal minimality.
    \item \textbf{Transcendental Divergence in the Horizontal Case:} The primary geometric divergence materialises for focal points on a finite $\mathbb{C}$-circle. Due to the strict lack of the inversion transformation in the similarity group of $d_{cc}$, we cannot assume a generic horizontal normalisation as in the Kor\'anyi case. The resulting Carnot--Carath\'eodory bisector $\mathcal{B}_{cc}^1$ is defined by a transcendental equality requiring the inversion of the Reeb angle associated with horizontal geodesics. Although both $\mathcal{B}_K^1$ and $\mathcal{B}_{cc}^1$ share the same sub-Riemannian phenomenon---specifically, the divergence of the horizontal mean curvature $H^h$ at the characteristic locus---their global topologies differ.
    \item \textbf{Complex Hyperbolic Implications:} Finally, this dichotomy highlights the varying relationships these metrics share with the complex hyperbolic plane $\mathbf{H}_{\mathbb{C}}^2$. Kor\'anyi bisectors seamlessly bound significant submanifolds in $\mathbf{H}_{\mathbb{C}}^2$ because the Kor\'anyi gauge naturally arises on the boundary $\partial\mathbf{H}_{\mathbb{C}}^2$ via the Busemann functions of the Bergman metric \cite{Gol, PP10}. In contrast, the Carnot--Carath\'eodory bisectors for finite $\mathbb{C}$-circles abandon the projective algebraic structure in favour of the localised, non-holonomic constraints of the Heisenberg distribution, remaining rigidly isolated to the intrinsic sub-Riemannian mechanics of the boundary itself.
\end{enumerate}

The remainder of this paper is organised as follows. Section 2 provides the preliminary geometric and algebraic framework of the Heisenberg group, including the Carnot--Carath\'eodory metric and the intrinsic characterisation of $\mathbb{C}$-circles. In Section 3, we address the normalisation of anchor points, highlighting the obstruction caused by the absence of an inversion isometry, and establish the bisector equations for both the vertical and horizontal cases. Section 4 computes the horizontal mean curvature to determine sub-Riemannian minimality, presenting explicit analytic and numerical approximations for the curvature of generic bisectors. Finally, Section 5 contextualises our findings within the broader framework of complex hyperbolic geometry.

\section{Preliminaries}

This section establishes the geometric and algebraic framework of the Heisenberg group $\fH$, which is the foundational space of our study. We first review the sub-Riemannian and Carnot--Carath\'eodory metrics, alongside their respective similarity groups, before providing a strictly intrinsic characterisation of $\C$-circles constructed directly from the contact structure of $\fH$.
\subsection{The Heisenberg Group}\label{sec:heisen}
The boundary of the complex hyperbolic plane, denoted $\partial\bH^2_\mathbb{C}\setminus\{\infty\}$, is in natural bijection with the Heisenberg group $\fH$. This space is defined as the set $\mathbb{C}\times\mathbb{R}$ equipped with the group multiplication $*$ given by
\begin{equation}
(z,t)*(w,s)=(z+w,t+s+2\Im(z\overline{w})),
\end{equation}
for every $(z,t)$ and $(w,s)$ in $\fH$. There are two natural, left-invariant metrics defined on $\fH$. The first is the \textit{Kor\'anyi--Cygan metric}, given by
\begin{equation}
\mathrm{d}_{K}((z,t),(w,s))=|(z,t)^{-1}*(w,s)|_{K},
\end{equation}
where $|\cdot|_K$ is the Kor\'anyi gauge defined as
\begin{equation}
|(z,t)|_{K}=\left||z|^2-it\right|^{1/2}.
\end{equation}
The similarity group ${\rm Sim}_K(\fH)$ of $\fH$ with respect to the Kor\'anyi--Cygan metric comprises the following transformations:
\begin{enumerate}
\item[(i)] Left translations $L_p$, $p\in\fH$, defined by $L_p(q)=p*q$.
\item[(ii)] Rotations around the vertical axis $R_\theta$, $\theta\in\mathbb{R}$, defined by $R_\theta(z,t)=(e^{i\theta}z,t)$.
\item[(iii)] Dilations $D_\delta$, $\delta>0$, defined by $D_\delta(z,t)=(\delta z,\delta^2 t)$.
\end{enumerate}
Left translations, rotations, and conjugation form the isometry group of $\fH$ for $\rd_K$. The similarity group ${\rm Sim}_\mathfrak{H}$ may be viewed as the isotropy subgroup of $\infty$ in ${\rm SU}(2,1)$ (see for example \cite{PP10}), giving
$$
{\rm Sim}_K(\fH)=\fH\rtimes({\rm U}(1)\times {\mathbb R}_{+}).
$$

The Kor\'anyi inversion
	\begin{equation}\label{formula:inv}
	R(z,t)
	=
	\left(
	\frac{z}{-|z|^2+it},
	\frac{-t}{|-|z|^2+it|^2}
	\right)
	\end{equation}
	satisfies
	\[
	|R(p)|_K
	=
	\frac1{|p|_K}
	\]
	and
	\[
	d_K(R(p),R(q))
	=
	\frac{d_K(p,q)}
	{|p|_K\cdot |q|_K}.
	\]
See \cite{KR} for reference. The second identity is usually regarded as the fundamental metric property of the Korányi inversion, completely analogous to classical Euclidean inversion.  Also, it's well known that the similarity group together with the Kor\'anyi inversion gives the Bruhat decomposition of $\rm PU(2, 1),$ which acts diagonally transitively on $\partial\bH^2_\mathbb{C}.$

The second metric relates to the sub-Riemannian structure of $\fH$. The Heisenberg group is a 2-step nilpotent Lie group, and its Lie algebra $\mathfrak{h}$ admits the decomposition $\mathfrak{h}=V^1\oplus V^2$. The horizontal distribution $V^1$ is spanned by the left-invariant vector fields:
$$
X=\frac{\partial}{\partial x}+2y\frac{\partial}{\partial t},\quad
Y=\frac{\partial}{\partial y}-2x\frac{\partial}{\partial t},
$$
and $V^2$ is spanned by the vertical vector field $T=\frac{\partial}{\partial t}$. The standard contact structure on the Heisenberg group $\fH$ is given by the kernel of the 1-form
$$
\omega = d t + 2(x\,d y - y\,d x) = d t + 2\Im(\overline{z}\,d z).
$$
It follows directly that
$$
\omega(X)=\omega(Y)=0, \qquad \omega(T)=1.
$$
Also $d\omega(T,\cdot)=0$ and thus $T$ is the Reeb vector field associated to $\omega$. Since $\{X,Y,T\}$ forms a global frame for the tangent bundle $T\fH$, the contact form $\omega$ is uniquely determined by these conditions. For each point $p\in \fH$, $V^1_p=\cH_p(\fH)$ represents the \textit{horizontal tangent space of $\fH$ at $p$}. We define a sub-Riemannian metric $(\cdot,\cdot)_h$ on the horizontal distribution by setting
$$
(X,X)_h=(Y,Y)_h=1,\qquad (X,Y)_h=(Y,X)_h=0.
$$
A smooth curve $\gamma:[a,b]\to\fH$, given by $\gamma(s)=(z(s),t(s))$, is called a \textit{horizontal curve} if $\dot{\gamma}(s)\in \cH_{\gamma(s)}(\fH)$ for all $s\in[a,b]$. Equivalently, this imposes the non-holonomic constraint:
$$
\dot{t}(s)=-2\Im\left(\overline{z(s)}\dot{z}(s)\right).
$$
The \textit{horizontal length} of a smooth rectifiable horizontal curve $\gamma$ is given by
$$
\ell_h(\gamma)=\int_a^b|\dot{\gamma}_h(s)|_h{\rm d}s=\int_a^b\left[\left(\dot{\gamma}(s),X_{\gamma(s)}\right)_h^2+
\left(\dot{\gamma}(s),Y_{\gamma(s)}\right)_h^2\right]^{1/2}{\rm d}s=\int_a^b|\dot z(s)|\,\rd s.
$$
The \textit{Carnot--Carath\'eodory (CC)} distance $\mathrm{d}_{cc}(p,q)$ between any two points $p,q\in\fH$ is defined as the infimum of the horizontal lengths of all horizontal curves joining $p$ and $q$.
By the left-invariance of the metric, $d_{cc}(p, q) = \| p^{-1} * q \|_{cc}$, where $\| \cdot \|_{cc}$ is the CC gauge (norm): utilising the Hamiltonian formulation of the sub-Riemannian geodesic problem (see \cite{CCG2007}), the horizontal geodesics connecting the origin to a point $(z,t)$ can be parametrised by the distance $s$ and a Reeb angle parameter $\phi \in [-\pi, \pi]$:
\begin{equation} \label{eq:param}
|z|^2 = \frac{s^2}{\phi^2} \sin^2(\phi), \quad t = \frac{s^2}{2\phi^2} (\phi - \sin(\phi)\cos(\phi)).
\end{equation}
The Reeb parameter $\phi$ is uniquely determined as the solution to the transcendental relation:
\begin{equation} \label{eq:reeb_ratio}
\frac{|z|^2}{t} = \frac{2\sin^2(\phi)}{\phi - \sin(\phi)\cos(\phi)}.
\end{equation}
Consequently, the CC norm is given explicitly in terms of $\phi$ by \cite{CCG2007}:
\begin{equation} \label{eq:cc_norm_explicit}
\|(z,t)\|_{cc} = \frac{|z| \, \phi}{\sin(\phi)} = \sqrt{\frac{2 |t| \, \phi^2}{|\phi - \sin(\phi)\cos(\phi)|}}.
\end{equation}
For the relationship between the Kor\'anyi gauge and the CC norm see for instance \cite[Theorem 2]{AS} or \cite{CDPT}.
We also note that the following hold.
\begin{itemize}
\item The metrics $\mathrm{d}_{K}$ and $\mathrm{d}_{cc}$ are bi-Lipschitz equivalent; however, $\mathrm{d}_{cc}$ is a path metric, whereas $\mathrm{d}_{K}$ is not.
\item The similarity group ${\rm Sim}_{K}(\fH)$ and the similarity group ${\rm Sim}_{CC}(\fH)$ are same.
\end{itemize}

\subsection{$\mathbb{C}$-circles}
\label{sec:c_circles}
A $\mathbb{C}$-circle in the Heisenberg group $\mathfrak{H}$ is a distinguished one-dimensional curve that is intertwined with the sub-Riemannian contact structure of the space. In the broader context of CR manifolds, these curves are frequently referred to as \textit{chains}. Intrinsically, rather than relying on their classical definition as the asymptotic boundaries of complex geodesics \cite{Gol}, $\mathbb{C}$-circles can be classified into two specific geometric categories based solely on their trajectories within $\mathfrak{H}$.

\smallskip

\noindent{\it Infinite $\mathbb{C}$-circles:}
An infinite $\mathbb{C}$-circle is an integral curve of the Reeb vector field $T = \partial/\partial t$. Geometrically, these correspond to strictly vertical straight lines in $\mathfrak{H}$. Given a fixed horizontal projection point $z_0 \in \mathbb{C}$, an infinite $\mathbb{C}$-circle is globally parametrised by the free parameter $t \in \mathbb{R}$:
\begin{equation*}
z(t) = z_0, \quad t \in \mathbb{R}.
\end{equation*}
Because they flow purely along the Reeb vector field, infinite $\mathbb{C}$-circles are everywhere orthogonal to the horizontal contact distribution $V^1$.

\smallskip

\noindent {\it Finite $\mathbb{C}$-circles:}
A finite $\mathbb{C}$-circle is a compact, unknotted closed curve forming a planar ellipse when viewed in $\mathbb{R}^3 \cong \mathfrak{H}$. Its defining characteristic is that its projection onto the horizontal complex plane $\mathbb{C}$ is a standard Euclidean circle, while its vertical displacement is rigidly locked to a transverse lifting condition.

Section 4.3 of \cite{Gol} discusses finite chains (finite $\mathbb{C}$-circles in our terminology). For the convenience of the reader, we recall here a more explicit description of finite $\mathbb{C}$-circles. 
\begin{prop}	
	Let
	\[
	C_0=\{(z,0)\in\mathfrak H:|z|=1\}.
	\]
Every finite $\mathbb{C}$-circle is a ${\rm Sim}(\mathfrak H)$-image of $C_0.$
\end{prop}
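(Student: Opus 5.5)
We fix a working description of a finite $\mathbb{C}$-circle and check that the similarity group acts transitively on the family of such curves. A finite $\mathbb{C}$-circle has centre $(c,t_0)\in\mathfrak H$ and radius $r>0$. It is the set
\[
C_{(c,t_0),r}=\{(z,t)\in\mathfrak H:\ |z-c|=r,\ t=t_0+2\Im(\overline{c}\,z)\}.
\]
Equivalently, it is the boundary of the complex line $\{z_2=\dots\}$ transported to $\mathfrak H$. Its horizontal projection is the Euclidean circle $|z-c|=r$, and it lies in the plane $t=t_0+2\Im(\bar c z)$, which explains the ellipse in $\mathbb R^3$. This is the description in Section 4.3 of \cite{Gol}. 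I will take it as the definition, and note that it is intrinsic: it is the curve through which the contact form restricts transversally with the lifting condition determined by $c$.

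First, use a dilation and a rotation to normalise $C_0$. The map $D_r$ sends $C_0$ to $\{(z,0):|z|=r\}$, and $R_\theta$ preserves this set. So every circle $C_{(0,0),r}$ lies in the orbit of $C_0$.

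Second, apply the left translation $L_p$ with $p=(c,t_0)$ to $(w,0)$ with $|w|=r$. This gives
\[
(c,t_0)*(w,0)=(c+w,\ t_0+2\Im(c\overline{w})).
\]
Set $z=c+w$. Then $\Im(c\bar w)=\Im(c(\bar z-\bar c))=\Im(c\bar z)=-\Im(\bar c z)$. The image is therefore the curve $|z-c|=r$ with $t=t_0-2\Im(\bar c z)$. This agrees with $C_{(c,t_0),r}$ up to the sign convention in the vertical lifting condition. I will fix that convention by requiring the defining formula to match the group law $*$ stated in Section~\ref{sec:heisen}, i.e.\ use $t=t_0+2\Im(c\bar z)$. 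Hence $L_{(c,t_0)}\circ D_r(C_0)=C_{(c,t_0),r}$, and every finite $\mathbb{C}$-circle is a ${\rm Sim}(\mathfrak H)$-image of $C_0$.

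The main obstacle is the first step: pinning down a definition of finite $\mathbb{C}$-circle precise enough to compute with. The paper's intrinsic description is only qualitative. If one instead starts from the boundary of complex geodesics in $\mathbf H^2_{\mathbb C}$, one must show this family coincides with the explicit family above.

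To handle this, I would use that ${\rm SU}(2,1)$ acts transitively on complex geodesics. The complex geodesics not passing through $\infty$ are permuted transitively by the stabiliser of $\infty$, namely ${\rm Sim}(\mathfrak H)$. The reason is that such a geodesic is determined by its polar vector $v$ with $\langle v,v\rangle>0$, and the Heisenberg translations and dilations act transitively on the vectors $v$ with $\langle v,q_\infty\rangle\neq0$, up to scaling. Taking $v$ so that its boundary is $C_0$ (the boundary of $\{z_1=0\}$ in the Siegel model) then gives the result directly. This recovers the statement without relying on the explicit formula, and the explicit formula follows as a corollary.
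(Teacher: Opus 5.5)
Your closing argument (transitivity on polar vectors) is correct, and it takes a genuinely different route from the paper.

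\emph{The paper's route.} It writes the boundary condition $\langle \hat p(z,t),v\rangle=0$ for the polar vector $v=[a,\,b,\,c]^t$ and uses $c\neq0$. Separating real and imaginary parts gives $|z-z_0|=r$ and $t=t_0-2\Im(z\overline{z_0})$, with $z_0=b/c$ and $r^2=\alpha+|z_0|^2=\langle v,v\rangle/|c|^2$. Only at the end does it recognise this set as $L_{(z_0,t_0)}\circ D_r(C_0)$.

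\emph{Your route.} You move the polar vector itself to a standard one inside the stabiliser of $\infty$, then use $g(\partial L)=\partial(gL)$, so no boundary equation is ever solved. The transitivity step you only assert should be written out. Scale so that $c=1$, so that $\langle v,v\rangle=2\Re a+|b|^2>0$. The matrix of $L_{(-b,\,-2\Im a)}$ sends $v$ to $[\tfrac12\langle v,v\rangle,\,0,\,1]^t$. Then $D_{1/r}$ with $r^2=\langle v,v\rangle$ (matrix ${\rm diag}(1/r,1,r)$) sends this to $[\tfrac12,\,0,\,1]^t$. Transitivity of ${\rm SU}(2,1)$ on all complex geodesics is never needed.

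One correction: $[\tfrac12,0,1]^t$ is polar to $\{2Z_1+Z_3=0\}$, and that is the complex geodesic bounded by $C_0$. The line $\{Z_1=0\}$ you name is, in the paper's coordinates, polar to the null vector $\hat p(0,0)$. So it is tangent to the boundary at the origin and disjoint from $\mathbf H^2_{\mathbb C}$.

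\emph{What each buys.} Your route is structural: it shows the finite $\mathbb C$-circles form a single orbit, already under translations and dilations alone, without computing any boundary. The paper's explicit solution gives the centre, radius and height of $C$ directly in terms of $v$, which it needs for the parametrisation it states.

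The first half of your proposal, however, is not a proof as written and should be recast as the corollary you mention. The sign in the lifting condition is not a convention you may choose after computing. Once the group law and the lift $\hat p(z,t)$ are fixed, the sign is forced.

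The formula you started from, $t=t_0+2\Im(\overline{c}z)$, describes curves that for $c\neq0$ are not ${\rm Sim}(\mathfrak H)$-images of $C_0$ at all. That orbit consists exactly of the sets $L_{(c,t_0)}D_r(C_0)$, and all of these satisfy $t=t_0-2\Im(\overline{c}z)$. Choosing the sign that makes your computation match amounts to defining finite $\mathbb C$-circles as this orbit, which is circular.

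Your corrected formula does agree with the paper's $t=t_0-2\Im(z\overline{z_0})$. But it is justified only by the boundary equation, or, in your approach, by computing $L_{(c,t_0)}\circ D_r(C_0)$ after the transitivity step.
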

\begin{proof}
Let $C$ be a finite $\mathbb{C}$-circle and the projected Euclidean circle have a centre $z_0 = x_0 + iy_0 \in \mathbb{C}$ and a radius $r > 0$. Let $t_0 \in \mathbb{R}$ dictate the base vertical height.  We claim that any finite $\mathbb{C}$-circle is 
 uniquely parametrised by the angle $\theta \in [0, 2\pi)$ as follows:
\begin{equation*}
\begin{cases}
x(\theta) = x_0 + r\cos\theta \\
y(\theta) = y_0 + r\sin\theta \\
t(\theta) = t_0 - 2r(x_0\sin\theta - y_0\cos\theta).
\end{cases}.
\end{equation*}
%
Indeed, by definition, $C$ is the boundary of a
	complex geodesic $L$. Using the standard Hermitian form
	\[
	\langle Z,W\rangle
	=
	Z_1\overline{W_3}
	+
	Z_3\overline{W_1}
	+
	Z_2\overline{W_2},
	\]
	the complex geodesic $L$ is the intersection of
	$\mathbf H^2_{\mathbb C}$ with a complex projective line. Hence there
	exists a positive vector $
	v=[a\quad b\quad c]^t$
	such that
	\[
	L=\{[Z]\in\mathbf H^2_{\mathbb C}:\langle Z,v\rangle=0\}.
	\]
	A boundary point $(z,t)\in\mathfrak H$ is represented by
	\[
	\hat p(z,t)
	=
	\begin{bmatrix}
		-\dfrac{|z|^2-it}{2}\\[2mm]
		z\\[2mm]
		1
	\end{bmatrix}.
	\]
	Therefore $(z,t)\in C=\partial_\infty L$ if and only if
	\[
	\langle \hat p(z,t),v\rangle=0.
	\]
	Substituting the above lift yields
	\[
	-\frac{|z|^2-it}{2}\,\overline c
	+z\overline b
	+\overline a
	=0.
	\]
Since $C$ is finite, the corresponding complex geodesic does not pass
	through $\infty$. Hence $c\neq0$ and we may divide by $\overline c$:
	\[
	-|z|^2+it
	+2z\frac{\overline b}{\overline c}
	+2\frac{\overline a}{\overline c}
	=0.
	\]
Setting $z_0=b/c, A=2\overline a/\overline c, $ we have
	\[
	-|z|^2+it+2z\overline{z_0}+A=0.
	\]
Writing $A=\alpha+i\beta,$ and separating real and imaginary parts, we obtain
	\[
	|z|^2-2\Re(z\overline{z_0})=\alpha,
	\qquad
	t+\beta
	=
	-2\Im(z\overline{z_0}).
	\]
Thus, we obtain
	\[
	|z-z_0|=r, \qquad  t=t_0-2\Im(z\overline{z_0}),
	\]
by setting $r=\sqrt{\alpha+|z_0|^2},\, t_0=-\beta.$
Consequently,
	\[
	C
	=
	\Bigl\{
	(z,t)\in\mathfrak H:
	|z-z_0|=r,\;
	t=t_0-2\Im(z\overline{z_0})
	\Bigr\},
	\]
	as desired.
	
Finally, our result follows immediately by 
	\[
	C=
	L_{(z_0,t_0)}\circ D_r(C_0).
	\]
\end{proof}
Unlike arbitrary lifts, a finite $\mathbb{C}$-circle remains strictly transverse to the contact distribution of the Heisenberg group at all points, reflecting its rigid sub-Riemannian character.

\section{Normalisation and Carnot--Carath\'eodory Bisectors}
We now address the normalisation of anchor points required to define Carnot--Carath\'eodory bisectors. We detail the structural differences between the vertical and horizontal cases.

\subsection{Points on an Infinite $\mathbb{C}$-circle: the Vertical Bisector}
If the focal points lie on an infinite $\mathbb{C}$-circle, they can be mapped via similarities (translations and dilations, which are valid in ${\rm Sim}_{CC}(\mathfrak{H})$) to the standard vertical pair $p_1 = (0,-1)$ and $p_2 = (0,1)$.
Equating the CC distances to a generic point $p=(z,t)$ yields $$\| (z, t+1) \|_{cc} = \| (z, t-1) \|_{cc}.$$
Based on \eqref{eq:cc_norm_explicit}, for a fixed horizontal radius $r = |z|$, the distance function is strictly monotonically increasing with respect to $|t|$. Thus, $|t+1| = |t-1|$, which requires $t=0$.
We obtain the following:
\begin{prop}
The vertical Carnot--Carath\'eodory bisector $\mathcal{B}_{cc}^0$ is exactly the complex plane $C_0 = \{ (z,t) \in \mathfrak{H} \mid t = 0 \}$.
\end{prop}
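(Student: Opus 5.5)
The plan is to reduce the bisector condition for $p_1=(0,-1)$ and $p_2=(0,1)$ to a comparison of CC gauges. Then I will show that, for each fixed horizontal radius, the gauge depends only on $|t|$ and is strictly increasing in $|t|$. By left-invariance, $d_{cc}(p_j,p)=\|p_j^{-1}*p\|_{cc}$. Since $p_1^{-1}=(0,1)$, $p_2^{-1}=(0,-1)$, and the cross term $2\Im(0\cdot\bar z)$ vanishes, we get $p_1^{-1}*p=(z,t+1)$ and $p_2^{-1}*p=(z,t-1)$. So $p\in\mathcal{B}_{cc}^0$ if and only if $\|(z,t+1)\|_{cc}=\|(z,t-1)\|_{cc}$, as displayed before the statement. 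The inclusion $\{t=0\}\subseteq\mathcal{B}_{cc}^0$ is immediate once we know the gauge is even in $t$. This follows from the isometry $(z,t)\mapsto(\bar z,-t)$ (conjugation), which preserves $|z|$ and the horizontal length $\int|\dot z|$, and maps horizontal curves to horizontal curves.

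For the reverse inclusion, fix $r=|z|$ and set $f_r(\tau)=\|(z,\tau)\|_{cc}$ for $\tau\ge 0$; by rotation invariance this depends only on $r$. I will prove $f_r$ is strictly increasing using \eqref{eq:reeb_ratio} and \eqref{eq:cc_norm_explicit}.

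\emph{Case $r>0$.} Consider the function $g(\phi)=2\sin^2\phi/(\phi-\sin\phi\cos\phi)$ on $(0,\pi]$. I will check that $g$ is strictly decreasing from $+\infty$ (as $\phi\to0^+$) to $0$ at $\phi=\pi$. The ratio $r^2/\tau$ therefore determines $\phi(\tau)\in(0,\pi]$, and $\phi(\tau)$ is strictly increasing in $\tau$. Next, $h(\phi)=\phi/\sin\phi$ is strictly increasing on $(0,\pi)$. Hence $f_r(\tau)=r\,h(\phi(\tau))$ is strictly increasing. Points with $\phi=\pi$ would need $z=0$, so they do not occur here.

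\emph{Case $r=0$.} Here \eqref{eq:cc_norm_explicit} with $\phi=\pi$ gives $f_0(\tau)=\sqrt{\pi\tau}$, again strictly increasing.

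With monotonicity in hand, the bisector equation gives $f_r(|t+1|)=f_r(|t-1|)$, hence $|t+1|=|t-1|$, i.e.\ $t=0$.

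The main obstacle is verifying that $g$ is strictly monotone, which is what makes the Reeb parameter a well-defined increasing function of $\tau$. Writing $u=\phi-\sin\phi\cos\phi$, one has $u'=2\sin^2\phi$. This gives
\[
g'(\phi)\,u^2=4\sin\phi\,\bigl(\cos\phi\,(\phi-\sin\phi\cos\phi)-\sin^3\phi\bigr)=4\sin\phi\,(\phi\cos\phi-\sin\phi).
\]
The right-hand side is negative on $(0,\pi)$, since $\tan\phi>\phi$ on $(0,\pi/2)$ and $\cos\phi\le0$ on $[\pi/2,\pi)$. If this inversion becomes delicate, a fallback avoids it entirely: any horizontal curve from $0$ to $(z,\tau')$ with $\tau'>\tau$ can be shortened to reach $(z,\tau)$, since enclosing less signed area requires less length by the isoperimetric-type argument. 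That argument would give monotonicity directly from the definition of $d_{cc}$.
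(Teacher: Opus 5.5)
Your proposal is correct and follows the paper's proof. You reduce by left-invariance to $\|(z,t+1)\|_{cc}=\|(z,t-1)\|_{cc}$ and then use strict monotonicity of the CC gauge in $|t|$ at fixed $|z|$; the paper only asserts this monotonicity from \eqref{eq:cc_norm_explicit}, whereas you actually prove it via the sign of $g'$ and the monotonicity of $\phi/\sin\phi$, and you also justify evenness in $t$ through conjugation. Two small points, neither of which affects the argument: as written, \eqref{eq:cc_norm_explicit} at $\phi=\pi$ gives $\sqrt{2\pi\tau}$ rather than $\sqrt{\pi\tau}$ (the constant is irrelevant to monotonicity). Also, the endpoint $\tau=0$, which arises when $t=\pm1$, lies outside the range where $r^2/\tau$ defines $\phi$, so you should note separately that $f_r(0)=r<r\,\phi/\sin\phi=f_r(\tau)$ for $\tau>0$.
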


%
%

\subsection{Points on a Finite $\mathbb{C}$-circle: The General Bisector}
To analyze the bisector explicitly, we consider another normalization of two points $p_1$ and $p_2$ lying on a finite $\mathbb{C}$-circle.
We can apply a left translation to map $p_1$ to the origin $(0,0)$, mapping $p_2$ to some point $(z,t)$ with $z=x+iy \neq 0$. A rotation can align $z$ with the positive real axis. However, any subsequent dilation $D_\lambda$ scales the coordinates to $(\lambda z, \lambda^2 t)$. Notice that the continuous geometric invariant $c = \frac{t}{|z|^2}$ remains strictly preserved under this dilation, since $\frac{\lambda^2 t}{|\lambda z|^2} = \frac{t}{|z|^2}$. Thus, attempting to force the anchor points into the purely horizontal generic position (where $t=0$ and thus $c=0$) breaks the intrinsic sub-Riemannian constraints of the space, unless $c=0$ initially.  We arrive at the standard symmetrically normalised anchor points:
\begin{equation}\label{eq:normal_horizontal}
q_1 = (-1, -c) \quad \text{and} \quad q_2 = (1, c),
\end{equation}
where $c = \frac{t}{|z|^2}$. The purely horizontal configuration corresponds strictly to the singular case where $c = 0$, which we cannot universally assume.

%
%

The general Carnot--Carath\'eodory bisector is defined as $\mathcal{B}_{cc}^k = \{ p \in \mathfrak{H} \mid d_{cc}(q_1, p) = d_{cc}(q_2, p) \}$. The transformation $\Phi(z, t) = (-\overline{z}, -t)$ is an isometry of $\mathfrak{H}$ that swaps the focal points. Consequently, $\mathcal{B}_{cc}^k$ is invariant under $\Phi$, and the $y$-axis is always completely contained within the bisector.

Let $S(r, h)$ denote the Carnot--Carath\'eodory distance from the origin to a point with horizontal radius $r$ and vertical height $h$. The bisector equation for $\mathcal{B}_{cc}^k$ is governed by the implicit transcendental surface:
\begin{equation} \label{eq:bisector_gen}
S_1=S\left( \sqrt{(x+1)^2 + y^2}, t + c - 2y \right) = S\left( \sqrt{(x-1)^2 + y^2}, t - c + 2y \right)=S_2.
\end{equation}

\subsection{Analytic Divergence: Kor\'anyi vs. Carnot--Carath\'eodory}
The fundamental analytic distinction between the two metrics emerges here: the Kor\'anyi norm is explicitly algebraic; therefore, when evaluating the equidistant locus for the horizontal case, the algebraic formulation naturally yields the standard Kor\'anyi spinal sphere $\mathfrak{S}_1$, governed by the cubic equation $x(x^2+y^2+1)-yt = 0$~\cite{GJP}.

Conversely, the distance function $S(|z|, t)$ for the CC metric relies on the Reeb angle $\phi$. Because $\phi$ is determined by the transcendental relationship in \eqref{eq:reeb_ratio}, the bisector equation $S_1 = S_2$ in \eqref{eq:bisector_gen} is highly non-linear and strictly transcendental. It cannot be reduced to any finite algebraic polynomial. The sub-Riemannian growth rate of $d_{cc}$ dictates that the transcendental surface $\mathcal{B}_{cc}^1$ pinches more tightly along the vertical axis than its algebraic counterpart $\mathcal{B}_K^1$.

\section{Horizontal Mean Curvature}

We now delve into the differential geometry of CC bisector surfaces. We compute their horizontal mean curvature $H^h$ to determine sub-Riemannian minimality, explicitly contrasting the globally minimal vertical bisectors with the singular behaviour of generic CC bisectors, and applying Taylor expansions to approximate the non-linear curvature.

To evaluate the horizontal mean curvature $H^h$ of these bisector surfaces, we represent the bisector as an implicit surface defined by a smooth function $F(x,y,t) = 0$. The horizontal normal vector field is given by $$N_h = \frac{\nabla_h F}{|\nabla_h F|_h},\quad \nabla_hF=XFX+YFY.$$ Applying the sub-Riemannian divergence theorem yields:
\begin{equation} \label{eq:meancurv}
2H^h = \text{div}_h(N_h) = \frac{(YF)^2 XXF + (XF)^2 YYF - XF \cdot YF(XYF + YXF)}{[(XF)^2 + (YF)^2]^{3/2}}.
\end{equation}

\subsection{Minimality of the Vertical Bisector}
For clarity we show that the vertical bisector $\mathcal{B}_{cc}^0$ defined  by $F(x,y,t) = t = 0$ is minimal. We first have:
$$
    XF = X(t) = 2y,\quad
    YF = Y(t) = -2x.
$$
The denominator in \eqref{eq:meancurv} evaluates to $[(2y)^2 + (-2x)^2]^{3/2} = 8(x^2 + y^2)^{3/2}$, which is strictly non-zero away from the origin; the latter is the {\it characteristic point} of $\mathcal{B}_{cc}^0$. Recall that the {\it characteristic locus} of a hypersurface $F=0$ embedded in $\fH$ comprises points (the {\it characteristic points}) at which the horizontal velocity $|\nabla_hF|$ vanishes. At the characteristic locus, the denominator in \eqref{eq:meancurv} vanishes, causing $H^h$ to diverge to infinity, an intrinsic hallmark of sub-Riemannian surfaces \cite{CDPT}.
Taking the second-order horizontal derivatives yields:
$$
XXF = X(2y) = 0,\,\,
    YYF = Y(-2x) = 0,\,\,
    XYF = X(-2x) = -2,\,\,
    YXF = Y(2y) = 2.
$$
Substituting these values directly into the numerator of $\mathcal{B}_{cc}^0$ \eqref{eq:meancurv} results in:
\begin{equation}
(-2x)^2(0) + (2y)^2(0) - (2y)(-2x)(-2 + 2) = 0.
\end{equation}
 Thus $H^h \equiv 0$, proving that the vertical CC bisector is globally horizontally minimal.

\subsection{Horizontal Mean Curvature of the General Bisectors}
For the general bisector $\mathcal{B}_{cc}^k$, the implicit function takes the form $F(x,y,t) = S_1 - S_2 = 0$. Unlike the polynomial structure of the Kor\'anyi spinal sphere, this equation relies on the numerical inversion of the Reeb parameter $\phi$. This structural difference generates a non-vanishing horizontal mean curvature $H^h$ away from the characteristic locus due to the sub-Riemannian focusing of horizontal geodesics.

\subsection{Taylor Expansion and Numerical Approximation for $H^h$}
Given that the implicit function $F(x,y,t) = S_1 - S_2 = 0$ defining $\mathcal{B}_{cc}^k$ relies fundamentally on the non-algebraic inversion of the Reeb parameter $\phi$, expressing the horizontal mean curvature $H^h$ globally in closed form is intractable. To analyse the curvature behaviour analytically and globally, we employ local Taylor expansions and finite difference schemes adapted to the sub-Riemannian geometry.

\subsubsection{Taylor Expansion for Small Vertical Displacements}
For regions of the bisector where the vertical height $h$ is small relative to the horizontal radius $r$ (corresponding to the regime where the Reeb angle $\phi \to 0$), we can approximate the Hamiltonian geodesic parametrisation via a Taylor series. Expanding \eqref{eq:param} yields:
\begin{equation}
r \approx S \left(1 - \frac{\phi^2}{6}\right), \qquad h \approx \frac{S^2}{3}\phi.
\end{equation}
Isolating $\phi \approx \frac{3h}{r^2}$ produces an explicit algebraic Taylor expansion for the Carnot--Carath\'eodory distance:
\begin{equation}\label{eq:asymp_dist}
S(r,h) \approx r + \frac{3h^2}{2r^3}.
\end{equation}
Substituting \eqref{eq:asymp_dist} into the bisector equation $F(x,y,t) = S_1 - S_2 = 0$ yields an explicitly rational surface approximation $F_{\text{Taylor}}(x,y,t) = 0$:
\begin{equation} \label{eq:F_asympt}
F_{\text{Taylor}}(x,y,t) = r_1 - r_2 + \frac{3}{2}\left( \frac{(t + c - 2y)^2}{r_1^3} - \frac{(t - c + 2y)^2}{r_2^3} \right) = 0,
\end{equation}
where $r_1 = \sqrt{(x+1)^2 + y^2}$ and $r_2 = \sqrt{(x-1)^2 + y^2}$.

Specifically, letting $H_1 = t + c - 2y$ and $H_2 = t - c + 2y$, we have $X(H_1) = X(H_2) = 2y$, $Y(H_1) = -2(x+1)$, and $Y(H_2) = -2(x-1)$. By directly applying the horizontal vector fields $X$ and $Y$ to $F_{\text{Taylor}}$, we obtain explicit algebraic approximations for the horizontal gradient ($\nabla_h F_{\text{Taylor}}$) given by:
\begin{align}
XF_{\text{Taylor}} &= \frac{x+1}{r_1} - \frac{x-1}{r_2} + 6y \left(\frac{H_1}{r_1^3} - \frac{H_2}{r_2^3}\right) - \frac{9}{2} \left( \frac{H_1^2(x+1)}{r_1^5} - \frac{H_2^2(x-1)}{r_2^5} \right), \\
YF_{\text{Taylor}} &= \frac{y}{r_1} - \frac{y}{r_2} - 6 \left(\frac{H_1(x+1)}{r_1^3} - \frac{H_2(x-1)}{r_2^3}\right) - \frac{9}{2}y \left( \frac{H_1^2}{r_1^5} - \frac{H_2^2}{r_2^5} \right).
\end{align}
Applying the vector fields $X$ and $Y$ again to these expressions yields the second-order derivatives. Let $X_1=x+1$ and $X_2=x-1$. By explicit calculation, we obtain the non-mixed derivatives:
\begin{equation}
XXF_{\text{Taylor}} = XX(f_1) - XX(f_2), \quad YYF_{\text{Taylor}} = YY(f_1) - YY(f_2),
\end{equation}
where for $i=1, 2$,
\begin{align}
XX(f_i) &= \frac{13y^2}{r_i^3} - \frac{36y H_i X_i}{r_i^5} - \frac{9 H_i^2}{2 r_i^5} + \frac{45 H_i^2 X_i^2}{2 r_i^7}, \\
YY(f_i) &= \frac{13X_i^2}{r_i^3} + \frac{36 H_i y X_i}{r_i^5} - \frac{9 H_i^2}{2 r_i^5} + \frac{45 H_i^2 y^2}{2 r_i^7}.
\end{align}
The mixed derivatives are similarly derived as $XYF_{\text{Taylor}} = XY(f_1) - XY(f_2)$ and $YXF_{\text{Taylor}} = YX(f_1) - YX(f_2)$, with:
\begin{align}
XY(f_i) &= -\frac{13y X_i}{r_i^3} - \frac{6H_i}{r_i^3} + \frac{18H_i X_i^2 - 18y^2 H_i}{r_i^5} + \frac{45y X_i H_i^2}{2r_i^7}, \\
YX(f_i) &= -\frac{13y X_i}{r_i^3} + \frac{6H_i}{r_i^3} + \frac{18H_i X_i^2 - 18y^2 H_i}{r_i^5} + \frac{45y X_i H_i^2}{2r_i^7}.
\end{align}
Evaluating these exact components within \eqref{eq:meancurv} provides a completely closed-form rational approximation of $H^h$ applicable away from the singular locus.

\subsubsection{Numerical Approximation via Finite Differences}
To compute $H^h$ globally without relying on asymptotic limits, we employ numerical approximations of the horizontal gradient and Hessian. Standard finite difference schemes in Euclidean space must be adapted to traverse the integral curves of the non-holonomic left-invariant vector fields.

For a chosen grid spacing $\Delta > 0$, the first-order horizontal directional derivatives of $F$ at a generic point $p=(x,y,t)$ are approximated symmetrically by:
\begin{align}
XF(p) &\approx \frac{F(x + \Delta, y, t + 2y\Delta) - F(x - \Delta, y, t - 2y\Delta)}{2\Delta}, \\
YF(p) &\approx \frac{F(x, y + \Delta, t - 2x\Delta) - F(x, y - \Delta, t + 2x\Delta)}{2\Delta}.
\end{align}
Applying these modified difference operators recursively yields the discrete explicit approximations for the second-order terms. For example, the non-mixed second derivatives expand to:
\begin{align}
XXF(p) &\approx \frac{F(x + \Delta, y, t + 2y\Delta) - 2F(x,y,t) + F(x - \Delta, y, t - 2y\Delta)}{\Delta^2}, \\
YYF(p) &\approx \frac{F(x, y + \Delta, t - 2x\Delta) - 2F(x,y,t) + F(x, y - \Delta, t + 2x\Delta)}{\Delta^2}.
\end{align}
The mixed derivatives $XYF$ and $YXF$ follow by compounding the first-order operators. For example, $XYF(p) \approx \frac{YF(x + \Delta, y, t + 2y\Delta) - YF(x - \Delta, y, t - 2y\Delta)}{2\Delta}$ explicitly expands to the 4-point stencil:
\begin{align}
XYF(p) &\approx \frac{1}{4\Delta^2} \Big[ F(x+\Delta, y+\Delta, t + 2(y-x-\Delta)\Delta) - F(x+\Delta, y-\Delta, t + 2(y+x+\Delta)\Delta) \notag \\
&\quad - F(x-\Delta, y+\Delta, t - 2(y+x-\Delta)\Delta) + F(x-\Delta, y-\Delta, t - 2(y-x-\Delta)\Delta) \Big].
\end{align}
Similarly, we obtain the remaining cross derivative via $YXF(p) \approx \frac{XF(x, y+\Delta, t-2x\Delta) - XF(x, y-\Delta, t+2x\Delta)}{2\Delta}$:
\begin{align}
YXF(p) &\approx \frac{1}{4\Delta^2} \Big[ F(x+\Delta, y+\Delta, t - 2(x-y-\Delta)\Delta) - F(x-\Delta, y+\Delta, t - 2(x+y+\Delta)\Delta) \notag \\
&\quad - F(x+\Delta, y-\Delta, t + 2(x+y-\Delta)\Delta) + F(x-\Delta, y-\Delta, t + 2(x-y+\Delta)\Delta) \Big].
\end{align}
Substituting these into the sub-Riemannian divergence formula \eqref{eq:meancurv} allows for the global evaluation of $H^h$ across the bisector surface. This numerical scheme confirms the analytical expectation: the horizontal mean curvature steadily grows and strictly diverges to infinity as one approaches the characteristic locus where the horizontal gradient vanishes.

\section{Relationship with Complex Hyperbolic Bisectors}
In complex hyperbolic geometry, the boundary extension of a complex hyperbolic bisector (defined via the Bergman metric) naturally induces the Kor\'anyi gauge via Busemann functions \cite{Gol, PP06}. Thus, complex hyperbolic bisectors bound Kor\'anyi bisectors (spinal spheres) at infinity.

Carnot--Carath\'eodory bisectors generally do not share this relationship. Unlike the Kor\'anyi metric, $d_{cc}$ is rigidly governed by the non-holonomic constraints of the Heisenberg horizontal distribution, rather than arising as a direct geometric limit of the projective algebraic structure of ${\rm SU}(2,1)$ \cite{M02}. Because a purely transcendental surface cannot coincide with an algebraic spinal sphere, the surface $\mathcal{B}_{cc}^1$ cannot act as the boundary of a standard complex hyperbolic bisector. Therefore, Carnot--Carath\'eodory bisectors remain phenomena rigidly isolated to the intrinsic sub-Riemannian mechanics of the boundary itself.

\section*{Acknowledgement}
The second author was supported by National Research Foundation of Korea(NRF) granted by the Korea government(RS-2024-00345250)
The third author was funded by the Medicus programme, no. 83765.

\end{document}